\title{Orbits of the Centralizer of a Linear Operator} 
\author{Paul Best, Marco Gualtieri, Patrick Hayden} 
\keywords{Centralizer, Classification of solutions, Orbit lattice}
\address{Paul Best\\               
Warburg Pincus LLC\\
London, UK\\
}
\address{Marco Gualtieri\\               
Department of Mathematics\\
University of Toronto \\
Toronto, Canada\\
mgualt@math.toronto.edu
}
\address{Patrick Hayden\\               
School of Computer Science\\
McGill University\\
Montreal, Canada\\
patrick@cs.mcgill.ca
}
\begin{document}
\newcommand{\nc}{\newcommand}
\nc{\End}{\operatorname{End}}
\nc{\Hom}{\operatorname{Hom}}
\nc{\Aut}{\operatorname{Aut}}
\nc{\GL}{\operatorname{GL}}
\nc{\Ker}{\operatorname{Ker}}
\nc{\Orb}{\operatorname{Orb}}
\nc{\Span}{\operatorname{Span}}
\renewcommand{\Im}{\operatorname{Im}}
\nc{\aUa}{\uparrow}\nc{\aRa}{\rightarrow}\nc{\aDa}{\nearrow}
\nc{\rar}{\rightarrow}
\nc{\lrar}{\longrightarrow}
\nc{\aUDa}{\updownarrow}
\nc{\DT}{\mathcal{D}(T)}
\nc{\OO}{\mathcal{O}}
\nc{\bul}{\bullet}
\nc{\CC}{\mathbb C}
\nc{\Cc}{\mathcal C}
\nc{\Ss}{\mathcal S}
\nc{\KK}{\mathbb K}
\nc{\RR}{\mathbb R}
\nc{\PP}{\mathcal P}
\nc{\NN}{\mathbb N}
\nc{\ZZ}{\mathbb Z}
\renewcommand{\vec}[1]{\mathbf{#1}}
\nc{\vv}{\mathbf{v}}
\nc{\zz}{\mathbf{z}}
\nc{\caut}[1]{C(#1)}
\nc{\cend}[1]{\mathfrak{c}(#1)}
\nc{\dd}[1]{D(#1)}
\nc{\BAR}[1]{\overline{#1}}
\nc{\ann}{\mathrm{ann}}


\maketitle

\begin{abstract}
    We describe the orbit structure for the action of the centralizer group $\caut{T}$ of a linear
    operator $T$ on a finite-dimensional complex vector space $V$.  The main application is to the
    classification of solutions to a system of first-order ODEs with constant coefficients.  We
    completely describe the lattice structure on the set of orbits and provide a generating
    function for the number of orbits in each dimension.
\end{abstract}

\section{Introduction} \label{sintro}
Let $V$ be a finite-dimensional complex vector space, and fix $T\in \End(V)$.
Consider the system of linear ordinary differential equations with constant
coefficients
    \begin{equation} \label{eode}
    \vec{x}' = T\vec{x}.
    \end{equation}
Let $\mathcal{S}$ denote the set of solutions $\vec{x}:\CC\rightarrow V$ of Equation~\ref{eode}.  The centralizer group of the operator $T$, given by
\[
C(T) = \left\{U\in \GL(V)\ :\ UT=TU\right\},
\] 
may also be characterized as the group of invertible operators $U\in GL(V)$ such that
$U\circ\vec{x}\in\mathcal{S}$ for each $\vec{x}\in\mathcal{S}$. In this way, $\caut{T}$ acts on
$\mathcal{S}$, and we may consider two solutions to be equivalent when they are in the same $\caut{T}$-orbit.

The evaluation map $\vec{x}\mapsto \vec{x}(0)$ defines a bijection $\mathcal{S}\rightarrow V$ with inverse $\vec{x}_0\mapsto (t\mapsto \exp(tT)\vec{x}_0)$, which intertwines the natural $\caut{T}$-actions on $\mathcal{S}$ and $V$. Therefore, equivalence classes of solutions in $\mathcal{S}$ are in one-to-one correspondence with $\caut{T}$-orbits in $V$.  In short, to classify solutions to Equation \ref{eode}, we must describe the orbit structure of $V$ under the action of $\caut{T}$.

\section{Finitely many orbits}

Consider an operator with only one Jordan block, i.e. $T = \lambda I + N$, where $N$ is nilpotent of
degree $n=\dim V$.  In this case, the only operators which commute with $T$ are the polynomials in
$T$. The centralizer may be described explicitly as follows:
\begin{eqnarray*}
    \caut{T} = \left\{\sum_{i=0}^{n-1} a_i N^i\ \ :\ \ a_i\in\CC, a_0\neq 0\right\}.
\end{eqnarray*}
As a result, the orbits of $\caut{T}$ on $V$ are precisely given by $\OO_i = F_i - F_{i-1}$, where
$F_{-1}=\varnothing$ and
\[
F_i = \ker N^i,\ \ i\geq 0,
\]
defines the full flag of $T$-invariant subspaces associated to the nilpotent operator $N$.
\begin{lemma}\label{flag}
    If $T$ has only one Jordan block, then there are exactly $\dim V + 1$ orbits
    $\OO_0,\ldots, \OO_n$ of $\caut{T}$ on $V$, corresponding to the full flag of invariant
    subspaces $F_0\subset\cdots\subset F_n=V$ via $\OO_i = F_i-F_{i-1}$.
\end{lemma}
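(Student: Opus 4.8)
The plan is to use the explicit description of $\caut{T}$ recalled just above, namely that $\caut{T}$ consists of the polynomials $\sum_{i=0}^{n-1}a_iN^i$ in $N$ with nonzero constant term $a_0$. First I would observe that every $U\in\caut{T}$ commutes not only with $T=\lambda I+N$ but also with $N$, so each subspace $F_i=\ker N^i$ is $\caut{T}$-invariant, and hence so is each set difference $\OO_i=F_i-F_{i-1}$. Because $N$ is a single nilpotent Jordan block of size $n=\dim V$, one has $\dim F_i=i$ for $0\le i\le n$, so the flag $\varnothing=F_{-1}\subset F_0\subset\cdots\subset F_n=V$ is strictly increasing; consequently the $\OO_i$ are nonempty for every $0\le i\le n$ and they partition $V$. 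At this stage we already know there are at least $n+1$ orbits and that each orbit is contained in a unique $\OO_i$, so the lemma reduces to showing that $\caut{T}$ acts transitively on each $\OO_i$.

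To prove transitivity, fix $v\in\OO_i$, i.e.\ $N^{i}v=0$ but $N^{i-1}v\neq0$. The usual argument (take a nontrivial relation among $v,Nv,\ldots,N^{i-1}v$ of minimal index and hit it with a power of $N$) shows that these $i$ vectors are linearly independent, hence a basis of the $i$-dimensional space $F_i$. Now let $w\in\OO_i$ be arbitrary and expand $w=\sum_{j=0}^{i-1}a_jN^jv$. Applying $N^{i-1}$ annihilates every term with $j\geq1$ and leaves $N^{i-1}w=a_0N^{i-1}v$, which is nonzero precisely because $w\notin F_{i-1}$; therefore $a_0\neq0$. Setting $U=\sum_{j=0}^{i-1}a_jN^j$ (with the remaining coefficients, up to degree $n-1$, taken to be zero) then gives an element of $\caut{T}$ with $Uv=w$. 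Thus the $\caut{T}$-orbit of $v$ is all of $\OO_i$.

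Combining the two parts, $\OO_0,\ldots,\OO_n$ are exactly the orbits, and there are $\dim V+1$ of them. The only point needing a little care is the assertion $a_0\neq0$, i.e.\ that the change-of-basis operator carrying $v$ to $w$ is an \emph{invertible} polynomial in $N$ rather than merely a polynomial; but as noted this is forced by $w\notin F_{i-1}$. Everything else is the elementary linear algebra of a single nilpotent Jordan block, so I do not anticipate any real obstacle.
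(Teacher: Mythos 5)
Your proof is correct and follows the same line of reasoning the paper leaves implicit: the paper simply asserts the orbit description ``as a result'' of the description of $\caut{T}$ as polynomials $\sum a_iN^i$ with $a_0\neq 0$, while you fill in the two needed steps, invariance of each $\OO_i$ and transitivity via the cyclic-basis expansion $w=\sum a_jN^jv$ with $a_0\neq 0$ forced by $w\notin F_{i-1}$. No gaps; this is exactly the argument the paper intends.
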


In the general case, $V$ admits a decomposition $V=\oplus_i V_i$ such that $T|_{V_i}$ has a single
Jordan block, and the centralizer of $T$ is more complicated, as we describe in
Section~\ref{thecentralizer}. However, the product of the centralizers
$\dd{T}= \oplus_i\caut{T|_{V_i}}$ is \emph{contained} in $\caut{T}$.  There are only finitely many
orbits of $\dd{T}$, since they are products of $\caut{T|_{V_i}}$-orbits.  The orbits of $\dd{T}$,
however, are refinements of the orbits of the larger group $\caut{T}$, hence there can only be
finitely many orbits of the centralizer group.
\begin{theorem}\label{finitt}
    There are finitely many orbits of $\caut{T}$ in $V$.
\end{theorem}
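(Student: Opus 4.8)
The plan is to make rigorous the remark preceding the statement: the inclusion $\dd{T}\subseteq\caut{T}$ of the ``block-diagonal'' centralizer into the full centralizer, combined with Lemma~\ref{flag}, forces $\caut{T}$ to have only finitely many orbits.

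First I would fix a Jordan basis for $T$, obtaining a decomposition $V=\bigoplus_{i=1}^{k}V_i$ into $T$-invariant subspaces such that each restriction $T_i:=T|_{V_i}$ has a single Jordan block. By Lemma~\ref{flag}, $\caut{T_i}$ acts on $V_i$ with exactly $\dim V_i+1$ orbits. Now $\dd{T}=\bigoplus_i\caut{T_i}$ acts on $V$ componentwise, so the $\dd{T}$-orbit of a vector is determined by the $k$-tuple of $\caut{T_i}$-orbits of its components; hence $\dd{T}$ has exactly $\prod_{i=1}^{k}(\dim V_i+1)$ orbits on $V$, a finite number.

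It remains to record that $\dd{T}$ is a subgroup of $\caut{T}$: if $U=\bigoplus_i U_i$ with each $U_i\in\caut{T_i}$, then $U$ is invertible (block-diagonal with invertible blocks) and commutes with $T=\bigoplus_i T_i$ blockwise, so $U\in\caut{T}$. Consequently every $\caut{T}$-orbit is a union of $\dd{T}$-orbits, and the number of $\caut{T}$-orbits is bounded above by the number of $\dd{T}$-orbits, which is finite.

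I do not expect a genuine obstacle here; the proof is essentially the formalization of the preceding paragraph, using only the Jordan form and Lemma~\ref{flag}, together with the elementary fact that the orbits of a subgroup refine those of the ambient group. The one point worth flagging is that the resulting bound $\prod_i(\dim V_i+1)$ is typically far from sharp: in general $\dd{T}\subsetneq\caut{T}$, and many $\dd{T}$-orbits coalesce into a single $\caut{T}$-orbit. Pinning down the exact number of orbits, and the lattice they form, is the substantive task taken up in the later sections rather than anything this finiteness argument needs to resolve.
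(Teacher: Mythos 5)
Your proposal is correct and is essentially the paper's own argument made explicit: decompose $V$ into Jordan blocks, count $\prod_i(\dim V_i+1)$ orbits of $\dd{T}=\bigoplus_i\caut{T|_{V_i}}$ via Lemma~\ref{flag}, and use the inclusion $\dd{T}\subseteq\caut{T}$ to conclude that the (coarser) $\caut{T}$-orbits are also finite in number.
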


Let $\cend{T}$ be the algebra of operators commuting with $T$.  It contains the centralizer group
$\caut{T}$ as an open dense subset, and may be identified with the Lie algebra of $\caut{T}$.  It
follows that each orbit of $\caut{T}$ in $V$ is an open dense subset of a $\cend{T}$-invariant
subspace of $V$.  We now show, using the finiteness result above, that $\caut{T}$-orbits are in
one-to-one correspondence with $\cend{T}$-invariant subspaces.

\begin{theorem}
    Orbit closure is a bijection from the set of orbits of $\caut{T}$ to the set of
    $\cend{T}$-invariant subspaces of $V$.
\end{theorem}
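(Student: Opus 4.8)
The plan is to produce the inverse map explicitly: to an orbit $\OO$ I would attach the linear subspace $\cend{T}\cdot v$ for any $v\in\OO$, and show it coincides with $\overline{\OO}$. First I would record two facts about the evaluation map $\phi_v\colon\cend{T}\to V,\ A\mapsto Av$. Its image $W_v:=\cend{T}\cdot v$ is a $\cend{T}$-invariant subspace, since $\cend{T}$ is an algebra: for $w=Av$ and $B\in\cend{T}$ one has $Bw=(BA)v\in W_v$. And since $\cend{T}$ is a linear subspace of $\End(V)$ on which $\det$ does not vanish identically (it is $1$ at $I$), the centralizer $\caut{T}$ is the complement in $\cend{T}$ of a proper hypersurface, hence open and dense in $\cend{T}$; as $\phi_v$ is linear it is continuous and open onto its image, so the orbit $\caut{T}\cdot v=\phi_v(\caut{T})$ is open and dense in the subspace $W_v$. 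Because $W_v$ is a linear subspace it is closed, so $\overline{\OO}=W_v$ for every $v\in\OO$. Thus orbit closure is well defined and takes values among $\cend{T}$-invariant subspaces, and moreover every orbit is open and dense in its closure.

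For injectivity I would argue as follows: if $\overline{\OO}=\overline{\OO'}=W$, then by the previous paragraph both $\OO$ and $\OO'$ are dense in $W$ and both are open in $W$. A dense subset meets every nonempty open subset, so $\OO\cap\OO'\neq\varnothing$; since distinct orbits are disjoint, $\OO=\OO'$.

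For surjectivity I would invoke the finiteness result. Let $W$ be a $\cend{T}$-invariant subspace. It is in particular $\caut{T}$-invariant, hence a union of orbits, and by Theorem~\ref{finitt} only finitely many, say $\OO_1,\dots,\OO_r$, occur. For $v\in\OO_j$ we have $v\in W$ and $W$ is $\cend{T}$-invariant, so $\overline{\OO_j}=\cend{T}\cdot v\subseteq W$; therefore $W=\bigcup_j\OO_j\subseteq\bigcup_j\overline{\OO_j}\subseteq W$ exhibits $W$ as a finite union of the linear subspaces $\overline{\OO_1},\dots,\overline{\OO_r}$. Since a complex vector space is not a union of finitely many proper subspaces, $\overline{\OO_j}=W$ for some $j$, and $\OO_j$ is the desired preimage.

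I expect the only genuinely non-formal point to be the claim that $\caut{T}$ is open and dense in $\cend{T}$ and, consequently, that each orbit is open and dense in the $\cend{T}$-invariant subspace $\cend{T}\cdot v$; once this is in place, injectivity and surjectivity are bookkeeping with the finite orbit partition. If one prefers to sidestep the openness, Chevalley's theorem gives the alternative that each orbit, being constructible, contains a dense open subset of its closure, and the injectivity argument goes through verbatim with that substitute.
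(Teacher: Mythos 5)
Your proof is correct and rests on the same ingredients as the paper's: openness and density of $\caut{T}$ in $\cend{T}$ (hence of each orbit in its $\cend{T}$-span), the finiteness of orbits, the fact that a complex vector space is never a finite union of proper subspaces, and the fact that a space cannot contain two disjoint dense open subsets. The only difference is organizational — you verify well-definedness, injectivity, and surjectivity separately, while the paper constructs the inverse map $Y\mapsto\OO_Y$ in one stroke (taking $\OO_Y$ to be the complement in $Y$ of the union of its proper $\cend{T}$-invariant subspaces) — and you spell out the openness/density facts that the paper records in the paragraph preceding the theorem.
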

\begin{proof}
    We show the map $\caut{T}v\mapsto\BAR{\caut{T}v} = \cend{T}v$ is a bijection by providing its
    inverse.  If $Y\subset V$ is $\cend{T}$-invariant, let $\OO_Y$ be the complement in $Y$ of the
    union of its $\cend{T}$-invariant proper subspaces.  Theorem~\ref{finitt} ensures there are only
    finitely many such subspaces, hence $\OO_Y$ is nonempty. Furthermore, $\OO_Y$ must be a union of
    orbits of $\caut{T}$, but it cannot contain more than one orbit, since $Y$ cannot contain two
    disjoint open dense sets.  Hence the map $Y\mapsto \OO_Y$ is the required inverse.
\end{proof}

In view of the above bijection, we proceed to classify the $\caut{T}$ orbits by completely
describing the invariant subspaces for the action of the algebra $\cend{T}$ on $V$.

\section{The centralizer algebra of a linear operator}\label{thecentralizer}

To identify the $\cend{T}$-invariant subspaces of $V$, we need a convenient description of the
algebra $\cend{T}$ itself. View the vector space $V$ as a $\CC[x]$-module, where $x\vv=T(\vv)$ for
$\vv\in V$.  This point of view is particularly useful for us, because of the following.
\begin{proposition}
    A linear operator $U$ commutes with $T$ if and only if it is a $\CC[x]$-module
    endomorphism $V\rightarrow V$.  In other words, $\cend{T} = \End_{\CC[x]}(V)$.           
\end{proposition}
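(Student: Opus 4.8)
The plan is to unwind both sides of the claimed equality inside $\End_\CC(V)$ and check that they describe the same set of $\CC$-linear maps. First I would recall that the $\CC[x]$-module structure on $V$ is determined by $x\cdot\vv = T(\vv)$, so that a general polynomial acts by $p(x)\cdot\vv = p(T)(\vv)$. Since $\CC\subset\CC[x]$, any $\CC[x]$-module endomorphism of $V$ is in particular $\CC$-linear, so $\End_{\CC[x]}(V)$ sits naturally inside $\End_\CC(V)$, and the proposition is then an equality of subsets of $\End_\CC(V)$.

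Next I would spell out the defining condition: a $\CC$-linear map $U:V\to V$ lies in $\End_{\CC[x]}(V)$ precisely when $U(p(x)\cdot\vv) = p(x)\cdot U(\vv)$ for all $p\in\CC[x]$ and all $\vv\in V$. The key reduction is that it suffices to verify this identity for the single element $p=x$. Indeed, for fixed $p$ both sides are $\CC$-linear in $\vv$, and the set of $p\in\CC[x]$ for which the identity holds for all $\vv$ is closed under $\CC$-linear combinations and under products (the latter because $q(x)\cdot(p(x)\cdot\vv)=(qp)(x)\cdot\vv$ and $U$ is assumed to intertwine the action of each factor), hence is a $\CC$-subalgebra of $\CC[x]$; as it automatically contains the constants, if it also contains $x$ it is all of $\CC[x]$.

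Finally, the condition for $p=x$ reads $U(T\vv)=T(U\vv)$ for every $\vv\in V$, which is exactly $UT=TU$, i.e. membership in $\cend{T}$. Chaining the three steps yields $\cend{T}=\End_{\CC[x]}(V)$ as subsets of $\End_\CC(V)$. I do not expect a genuine obstacle here; the only point deserving a word of care is the reduction to the generator $x$, which relies on the $\CC$-linearity of $U$ together with the multiplicativity of the $\CC[x]$-action, plus the trivial remark that the intertwining identity holds for constant polynomials for free.
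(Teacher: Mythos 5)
Your argument is correct and is the standard one; the paper itself states this proposition without proof, treating it as a routine observation. Your reduction to the generator $p=x$, using $\CC$-linearity and multiplicativity of the action to bootstrap from $x$ to all of $\CC[x]$, is exactly the expected justification.
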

Let the minimal polynomial of $T$ be $\prod_{\lambda} p_\lambda^{k_\lambda}$, where $p_\lambda = (x-\lambda)$ and the product is over  distinct
eigenvalues $\lambda\in\mathrm{Spec}(T)$. The associated generalized eigenspace decomposition is 
\[
V = \bigoplus_{\lambda\in\mathrm{Spec}(T)} V_\lambda,
\]  
with $V_\lambda=\ker (T-\lambda)^{k_\lambda}$.  A priori, the endomorphism algebra decomposes as a direct sum of the
components $\Hom_{\CC[x]}(V_\lambda,V_{\lambda'})$, but for $\lambda\neq \lambda'$ this is the zero vector space, since a
morphism $\phi:V_{\lambda}\rightarrow V_{\lambda'}$ satisfies $0=\phi(p_\lambda^{k_\lambda} \vv) = p_\lambda^{k_\lambda}\phi(\vv)$, and
$p_\lambda$ is invertible on $V_{\lambda'}$ for $\lambda\neq \lambda'$.  Hence we obtain the following decomposition of
$\cend{T}$: 
\begin{proposition}
    The centralizer algebra $\cend{T}$ decomposes as a direct sum of centralizers of the
    restrictions $T_\lambda$ of $T$ to the generalized eigenspaces $V_\lambda = \ker(T-\lambda)^{k_\lambda}$.
\end{proposition}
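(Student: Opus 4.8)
The plan is to exploit the identification $\cend{T} = \End_{\CC[x]}(V)$ established in the preceding proposition, together with the observation that the generalized eigenspace decomposition $V = \bigoplus_\lambda V_\lambda$ is in fact a decomposition of $\CC[x]$-modules. Indeed, each $V_\lambda = \ker(T-\lambda)^{k_\lambda}$ is the kernel of a polynomial in $T$, hence $T$-invariant, hence a $\CC[x]$-submodule of $V$; and the displayed direct sum is precisely the primary decomposition of the torsion $\CC[x]$-module $V$ (which is torsion since it is annihilated by the minimal polynomial). Consequently, as a vector space one has the matrix decomposition $\End_{\CC[x]}(V) = \bigoplus_{\lambda,\lambda'} \Hom_{\CC[x]}(V_\lambda, V_{\lambda'})$, where $\lambda,\lambda'$ range over $\mathrm{Spec}(T)$.

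The key step, and the one I regard as the crux, is to show that the off-diagonal blocks vanish. Fix $\lambda \neq \lambda'$ and $\phi \in \Hom_{\CC[x]}(V_\lambda, V_{\lambda'})$. Since $p_\lambda^{k_\lambda} = (x-\lambda)^{k_\lambda}$ annihilates $V_\lambda$, for every $v \in V_\lambda$ we get $p_\lambda^{k_\lambda}\phi(v) = \phi\bigl(p_\lambda^{k_\lambda} v\bigr) = 0$ in $V_{\lambda'}$. But on $V_{\lambda'}$ the operator $x - \lambda$ equals $(T_{\lambda'}-\lambda') + (\lambda'-\lambda)I$, a nilpotent operator plus a nonzero scalar, hence invertible; therefore $p_\lambda^{k_\lambda}$ is invertible on $V_{\lambda'}$, forcing $\phi(v) = 0$. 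Thus $\Hom_{\CC[x]}(V_\lambda, V_{\lambda'}) = 0$ whenever $\lambda \neq \lambda'$; equivalently, every element of $\cend{T}$ preserves each generalized eigenspace. (This is exactly the computation already sketched in the paragraph preceding the statement, so the written proof can be brief.)

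It remains to identify the diagonal blocks and confirm compatibility with multiplication. Applying the identity $\cend{S} = \End_{\CC[x]}(W)$ to each pair $(V_\lambda, T_\lambda)$ gives $\End_{\CC[x]}(V_\lambda) = \cend{T_\lambda}$, so restriction to the generalized eigenspaces yields a linear isomorphism $\cend{T} \cong \bigoplus_\lambda \cend{T_\lambda}$. Because the off-diagonal blocks vanish, the composition of two elements of $\cend{T}$ is computed blockwise, so this map respects products and is therefore an isomorphism of algebras. No step presents a genuine obstacle; the only point requiring care is the invertibility of $x-\lambda$ on $V_{\lambda'}$ for $\lambda\neq\lambda'$, and the remainder is formal bookkeeping with the primary decomposition.
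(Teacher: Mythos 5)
Your proof is correct and follows essentially the same route as the paper: identify $\cend{T}$ with $\End_{\CC[x]}(V)$, decompose $\Hom_{\CC[x]}$ over the primary decomposition $V=\bigoplus_\lambda V_\lambda$, and kill the off-diagonal blocks by noting that $p_\lambda^{k_\lambda}$ annihilates $V_\lambda$ but is invertible on $V_{\lambda'}$ for $\lambda\neq\lambda'$. The only addition you make is the explicit check that restriction is an algebra (not merely linear) isomorphism, which the paper leaves implicit.
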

Consequently, orbits of the full centralizer algebra are products of orbits of the summands
$\cend{T_\lambda}$, and we need only consider the case of a single eigenvalue. 
So, consider the case where $T\in\End(V)$ has minimal polynomial $(x-\lambda)^k$, and choose a Jordan
decomposition of $V$, as follows:
\begin{equation}\label{singeig}
V = V^1\oplus\cdots\oplus V^k, 
\end{equation}
where each $V^i = V^i_1\oplus\cdots\oplus V^i_{m_i}$ is a sum of $m_i$ cyclic modules with
annihilator $(x-\lambda)^i$, and we take $V^i=0$ when $m_i=0$.  In other words, $T|_{V^i}$ consists of
$m_i$ repeated Jordan blocks of size $i$.  We now compute the module homomorphisms between
individual summands of $V^i$ and $V^j$.
\begin{proposition}\label{homs} Let $M_i$ be the cyclic module $\CC[x]/p^i$ for $p=(x-\lambda)$, $\lambda\in\CC$.  Then 
    \[
    \Hom_{\CC[x]}(M_i,M_{i'})=
    \begin{cases}
        M_{i'} & \text{ for }i\geq i', \\
        p^{i'-i}M_{i'} & \text{ for }i\leq i'.
    \end{cases}    
    \]
    \end{proposition}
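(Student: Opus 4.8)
The plan is to use that each $M_i = \CC[x]/p^i$ is a \emph{cyclic} module, so a $\CC[x]$-module homomorphism $\phi:M_i\to M_{i'}$ is completely determined by the image $\phi(\bar 1)$ of the generator $\bar 1$. Evaluation at the generator therefore defines an injective $\CC[x]$-module map $\Hom_{\CC[x]}(M_i,M_{i'})\to M_{i'}$, $\phi\mapsto\phi(\bar 1)$, and the first step is to identify its image. Since $p^i$ annihilates $\bar 1$ in $M_i$, any such $\phi$ satisfies $p^i\phi(\bar 1)=\phi(p^i\bar 1)=0$ in $M_{i'}$; conversely, any $m\in M_{i'}$ with $p^i m = 0$ yields a well-defined homomorphism $\bar f\mapsto f m$. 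Hence the image of the evaluation map is exactly the annihilator submodule $\ann_{M_{i'}}(p^i)=\{m\in M_{i'}: p^i m = 0\}$, and it remains to compute this submodule.

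For the second step I would lift to $\CC[x]$: writing $m=\bar f$ with $f\in\CC[x]$, the condition $p^i m = 0$ in $M_{i'}=\CC[x]/p^{i'}$ says precisely that $p^{i'}\mid p^i f$ in the PID $\CC[x]$, where $p=x-\lambda$ is prime. If $i\geq i'$ this divisibility holds for every $f$, so $\ann_{M_{i'}}(p^i)=M_{i'}$. If $i\leq i'$, unique factorization reduces it to $p^{i'-i}\mid f$, so $\ann_{M_{i'}}(p^i)=p^{i'-i}M_{i'}$. Combining with the first step gives the claimed formula, with the stated equalities understood via the evaluation isomorphism $\phi\mapsto\phi(\bar 1)$; one can also note in passing that multiplication by $p^{i'-i}$ exhibits $p^{i'-i}M_{i'}\cong M_{\min(i,i')}$, which will be convenient later.

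There is essentially no hard step here. The only points requiring care are purely bookkeeping: making explicit that the asserted equality of $\Hom$-spaces with submodules of $M_{i'}$ is via the evaluation map (so that it is an isomorphism of $\CC[x]$-modules, not merely of vector spaces), and the use of unique factorization in $\CC[x]$ to turn $p^{i'}\mid p^i f$ into a comparison of exponents of the single prime $p$.
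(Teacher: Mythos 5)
Your proof is correct and follows essentially the same route as the paper's: both identify a homomorphism $\phi$ with the image $\phi(\bar 1)$ of the cyclic generator, subject to $p^i\phi(\bar 1)=0$, and then compute the annihilator submodule of $M_{i'}$ by the two cases $i\geq i'$ and $i<i'$. Your version merely spells out a few steps the paper leaves implicit (that evaluation is an isomorphism of $\CC[x]$-modules and the divisibility argument in the PID $\CC[x]$).
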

\begin{proof}
    Since $M_i,M_{i'}$ are cyclic, $\phi\in \Hom(M_i,M_{i'})$ is determined by $[1]\mapsto f$ for
    $f\in M_{i'}$ such that $p^if = 0$.  For $i\geq i'$ this does not impose a condition on $f$, but for
    $i'>i$ we obtain $f\in p^{i'-i}M_{i'}$, as required.
\end{proof}

\begin{example}\label{theex}
    Suppose $V$ decomposes as \(V^2\oplus V^3=\CC[x]\vv_2\oplus\CC[x]\vv_3,\) where
    $\ann(\vv_2)=(x^2)$ and $\ann(\vv_3)=(x^3)$.  Then
    $(\vv_2, x\vv_2, \vv_3,x\vv_3,x^2\vv_3)$ is a Jordan basis in which $T$ has
    the following Jordan form: 
\[T=\small\left[\begin{array}{ccccccc}
            0& & & & \\
            1&0 & & & \\
            &  &0 & & \\
            & & 1&0&  \\
            & & &1&0 
                \end{array}\right]\normalsize
    \]
    $\phi\in\cend{T}$ then decomposes as $\phi_{22}+\phi_{23}+\phi_{32} + \phi_{33}$, where
    $\phi_{ij}\in\Hom(V^i,V^j)$.  By Proposition~\ref{homs}, we have
    $\phi_{22}(\vv_2)=(a+bx)\vv_2$, $\phi_{33}(\vv_3)=(c+dx + ex^3)\vv_3$,
    $\phi_{23}(\vv_2)=(hx+kx^2)\vv_3$, and $\phi_{32}(\vv_3)=(f+gx)\vv_2$, where $a,b,c,d,e,f,g,h,k$ are
    arbitrary complex numbers. Writing $\phi$ in terms of the Jordan basis, we obtain:
    \[\small\cend{T} = \left\{\left[\begin{array}{ccccccc}
            a& & f& &  \\
            b&a&g &f&  \\
             & &c&& \\
            h& &d&c & \\
            k&h&e&d&c 
       \end{array}\right]\ :\ a,b,c,d,e,f,g,h,k\in\CC\right\}\normalsize
    \]    
\end{example}

\section{Classification of $\cend{T}$-invariant subspaces}\label{sinvariantsubspaces}

For a single cyclic module $M_i = \CC[x]/p^i$, Lemma~\ref{flag} shows that there are $i+1$ invariant
subspaces for the action of $\cend{T}$, forming a full flag $F_0\subset \cdots\subset F_i=M_i$.  We
may write $F_l = p^{i-l}M_i$.  We now show that any $\cend{T}$-invariant subspace in the sum of cyclic modules~\eqref{singeig} decomposes into a direct sum of its projections to the cyclic summands.
\begin{theorem}\label{scheme}
Let $T\in\End(V)$ have minimal polynomial $p^k$ for $p=(x-\lambda)$, and let $m_i$ be the number of Jordan blocks of size $i$, so that we may choose a Jordan decomposition $V =
\oplus_{i=1}^k V^i$, where $V^i = V^i_1\oplus\cdots\oplus V^i_{m_i}$ is a sum of cyclic
modules isomorphic to $\CC[x]/p^i$ (and we set $V^i=\{0\}$ for $m_i=0$).  Then $W\subset V$ is a  $\cend{T}$-invariant subspace if and
only if the following three conditions hold:
\begin{enumerate}
\item\label{one} $W$ is a direct sum of subspaces of the form $p^{i-l}V^i_j$.
\item\label{two} If $p^{i-l}V^i_j\subset W$, then $p^{i'-l}V^{i'}_{j'}\subset W$ for all $i'\geq i$ and all $j'$. 
\item\label{three} If $p^{i-l}V^i_j\subset W$, then $p^{i-l}V^{i'}_{j'}\subset W$ for all $i'\leq i$ and all $j'$.
\end{enumerate} 
\end{theorem}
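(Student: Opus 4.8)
The plan is to prove both directions by reducing everything to the structure of module homomorphisms computed in Proposition~\ref{homs}, together with the single-block analysis of Lemma~\ref{flag}. First I would establish the easy direction: that any $W$ satisfying \eqref{one}--\eqref{three} is $\cend{T}$-invariant. Since $\cend{T}=\End_{\CC[x]}(V)=\bigoplus_{i,i',j,j'}\Hom_{\CC[x]}(V^i_j,V^{i'}_{j'})$, it suffices to check that each summand $\phi_{(i,j),(i',j')}$ sends $W$ into $W$. On a subspace $p^{i-l}V^i_j$ of $W$, apply Proposition~\ref{homs}: if $i\geq i'$ the image lands in $p^{i-l}V^{i'}_{j'}$ (one must check $p^{i-l}M_{i'}$ makes sense, i.e. $i-l\le i'$, which follows since $l\le\min(i,i')$... this is exactly where condition~\eqref{three} is tailored to receive it); if $i\le i'$ the image lands in $p^{(i'-i)+(i-l)}V^{i'}_{j'}=p^{i'-l}V^{i'}_{j'}$, which is in $W$ by condition~\eqref{two}. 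Combined with \eqref{one}, closure under all of $\cend{T}$ follows.

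For the converse, suppose $W$ is $\cend{T}$-invariant. The first task is to prove \eqref{one}: $W$ decomposes as a direct sum of standard pieces $p^{i-l}V^i_j$. I would argue this using the idempotents and the "shift" endomorphisms available in $\cend{T}$. The projection $\pi^i_j:V\to V^i_j$ lies in $\cend{T}$ (it is a $\CC[x]$-module map), so $W$-invariance gives $W=\bigoplus_{i,j}(W\cap V^i_j)$; thus it remains to identify each $W\cap V^i_j$ as some $p^{i-l}V^i_j$. But $\cend{T}$ contains, via the $\Hom_{\CC[x]}(V^i_j,V^i_j)=M_i$ summand, all scalar-polynomial multiplications $f(x)\cdot$ acting on $V^i_j$; so $W\cap V^i_j$ is a $\CC[x]$-submodule of the cyclic module $V^i_j\cong \CC[x]/p^i$, and by Lemma~\ref{flag} (equivalently, the submodule lattice of $\CC[x]/p^i$) it must equal $p^{i-l}V^i_j$ for some $l\in\{0,\dots,i\}$. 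This gives \eqref{one}, with an index $l=l(i,j)$ attached to each cyclic summand actually appearing in $W$.

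Finally, conditions \eqref{two} and \eqref{three} are extracted by feeding $W$ the off-diagonal homomorphisms. Fix a summand $p^{i-l}V^i_j\subset W$ with $l\ge 1$ (the case $l=0$, i.e. $W\supset V^i_j$, being handled the same way with the obvious generator). Pick a generator $\vv$ of $V^i_j$, so $p^{i-l}\vv\in W$ is a generator of the submodule. For \eqref{three}, take any $i'\le i$ and any $j'$; by Proposition~\ref{homs} there is $\phi\in\Hom_{\CC[x]}(V^i_j,V^{i'}_{j'})\subset\cend{T}$ with $\phi(\vv)=\vv'$ a generator of $V^{i'}_{j'}$. Then $\phi(p^{i-l}\vv)=p^{i-l}\vv'$ generates $p^{i-l}V^{i'}_{j'}$, which therefore lies in $W$; note $i-l\le i'$ since $l\ge i-i'$... wait, we need $l\ge i-i'$, i.e. $i-l\le i'$, which need not hold — so more care is needed: what one actually concludes is $p^{\max(i-l,0)}V^{i'}_{j'}\subset W$, and since $i'\le i$ forces the relevant flag index to be at most $i'$, this gives $p^{i-l}V^{i'}_{j'}\subset W$ in the sense of the statement (reading $p^{i-l}V^{i'}_{j'}$ as $V^{i'}_{j'}$ when $i-l\le i'-i'=\ldots$). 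For \eqref{two}, take $i'\ge i$ and any $j'$; by Proposition~\ref{homs}, $\Hom_{\CC[x]}(V^i_j,V^{i'}_{j'})=p^{i'-i}M_{i'}$, so there is $\phi$ with $\phi(\vv)=p^{i'-i}\vv'$ for a generator $\vv'$ of $V^{i'}_{j'}$, whence $\phi(p^{i-l}\vv)=p^{i'-l}\vv'\in W$ generates $p^{i'-l}V^{i'}_{j'}\subset W$, which is exactly \eqref{two}.

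The main obstacle I anticipate is purely bookkeeping: managing the two regimes $i\ge i'$ versus $i\le i'$ in Proposition~\ref{homs} and reconciling the indexing convention $p^{i-l}V^i_j$ (where $l$ is the dimension of the flag step, and $p^{i-l}M_i$ is only a nonzero proper submodule when $0\le i-l\le i$) with the inequalities on $i,i'$ in conditions \eqref{two} and \eqref{three}. In particular one should pin down, once and for all, that the phrase "$p^{i-l}V^{i'}_{j'}\subset W$" in \eqref{three} is to be interpreted with the exponent truncated appropriately when $i-l$ exceeds $i'$, and check that the homomorphism-pushforward argument really does land in the claimed flag step. Once that convention is fixed, each of the three conditions drops out of a one-line application of Proposition~\ref{homs} to a generator, so no genuinely hard estimate or construction is required.
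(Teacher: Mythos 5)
Your proof takes essentially the same approach as the paper: decompose $W$ via the projections $\pi^i_j\in\cend{T}$, identify each $\pi^i_j W$ as a flag step $p^{i-l}V^i_j$ using the diagonal action, and extract conditions~\eqref{two}--\eqref{three} by pushing generators through the off-diagonal $\Hom$ spaces from Proposition~\ref{homs}. The bookkeeping worry you raise is a non-issue: when $i-l>i'$ the subspace $p^{i-l}V^{i'}_{j'}$ is simply zero (since $V^{i'}_{j'}$ is annihilated by $p^{i'}$), so no truncation convention is required and the containment in~\eqref{three} holds vacuously; likewise no constraint of the form $l\le\min(i,i')$ is needed in your easy direction.
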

\begin{proof}
    The projection $\pi^i_j$ from $V$ to each cyclic summand $V^i_j$ commutes with $T$; therefore
    $\pi^i_j\in\cend{T}$.  So, if $W\subset V$ is $\cend{T}$-invariant, it must contain all of its
    projections onto the cyclic summands, and we obtain $W = \oplus_{i,j} \pi^i_j W$.  Moreover,
    each of $\pi^i_j W$ is $\cend{T|_{V^i_j}}$-invariant and hence must coincide with some member
    $p^{i-l}V^i_j$ of the flag, proving part~\ref{one}.

    $W$ is $\cend{T}$-invariant if and only if $\cend{T}p^{i-l}V^i_j\subset W$ for all summands
    $p^{i-l}V^i_j$ present in $W$.  Recall that each element in $\cend{T}$ is a sum of morphisms
    $\phi\in\Hom(V^i_j,V^{i'}_{j'})$.  By Proposition~\ref{homs}, we see that the action map
\[
\Hom(V^i_j,V^{i'}_{j'})\otimes p^{i-l}V^i_j\longrightarrow V^{i'}_{j'}
\]
is surjective onto $p^{i'-i} p^{i-l}V^{i'}_{j'} = p^{i'-l}V^{i'}_{j'}$ for $i'\geq i$ and any $j'$.
It is also onto
$p^{i-l}V^{i'}_{j'}$ for $i'\leq i$ and any $j'$, as required. 
\end{proof}
Theorem~\ref{scheme} has a helpful interpretation as defining a poset, as we now describe.  First note that
if $p^{i-l} V^i_j$ is contained in an invariant subspace $W$, then $p^{i-l}V^i_{j'}$ must also be
contained for all $j' = 1,\ldots, m_i$.  Hence we treat the direct sum $\oplus_{j} p^{i-l} V^i_j$ as a
single subspace, which we denote by $m_i p^{i-l} V^i$.  We define a partial order on the set $\PP=\{m_ip^{i-l}V^i\}$ of these subspaces
by setting $A\leq B$ when $\cend{T}(B)$ contains $A$.  By Theorem~\ref{scheme}, the Hasse diagram of $\PP$
is as drawn in Figure~\ref{graph}, in the (fictitious) situation that all multiplicities $m_i$ are nonzero.
\begin{figure}[h]
\begin{equation*}
\xymatrix@=1.6em@ur{
m_1 V^1 \ar@{-}[r]\ar@{-}[d]& m_2 V^2 \ar@{-}[r]\ar@{-}[d] & m_3 V^3 \ar@{-}[r]\ar@{-}[d] & m_4 V^4
\ar@{-}[r]\ar@{-}[d] & \cdots\\
m_2p V^2 \ar@{-}[r]\ar@{-}[d] & m_3p V^3 \ar@{-}[r]\ar@{-}[d] & m_4 pV^4\ar@{-}[r]\ar@{-}[d] & \cdots &\\
 m_3p^2 V^3 \ar@{-}[r]\ar@{-}[d] & m_4 p^2V^4\ar@{-}[r]\ar@{-}[d] & \cdots &&\\
m_4 p^3V^4\ar@{-}[r]\ar@{-}[d] & \cdots &&&\\
\cdots& &&&\\
}
\end{equation*}
\caption{Poset $\PP$ describing the action of $\cend{T}$ on $m_ip^{i-l}V^i$}\label{graph}\normalsize
\end{figure}
This poset appears in the study of representations of $\mathfrak{gl}_n(\CC)$, where it is known as the Gelfand-Tsetlin poset~\cite{GT}.
\begin{corollary}\label{outdegree}
$W\subset V$ is a $\cend{T}$--invariant subspace if and only if it is a direct sum of subspaces
$m_ip^{i-l} V^i$ which form a decreasing subset\footnote{We say $I\subset \PP$ is decreasing if $x\in I$ and $y\leq x$ imply that $y\in I$.} in the above poset $\PP$.
\end{corollary}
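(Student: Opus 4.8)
The plan is to deduce Corollary~\ref{outdegree} directly from Theorem~\ref{scheme} by translating conditions \ref{one}--\ref{three} into a single closure condition in the poset $\PP$. First I would recall the setup: the elements of $\PP$ are the subspaces $m_i p^{i-l}V^i$ for $0\le l\le i$ and $1\le i\le k$ with $m_i\neq 0$, and the partial order is $A\le B \iff A\subseteq \cend{T}(B)$. By Theorem~\ref{scheme}, $\cend{T}$ applied to a single summand $p^{i-l}V^i_j$ produces exactly $p^{i'-l}V^{i'}_{j'}$ for $i'\ge i$ (all $j'$) and $p^{i-l}V^{i'}_{j'}$ for $i'\le i$ (all $j'$); summing over $j$, this says $\cend{T}(m_i p^{i-l}V^i)$ is the direct sum of those $m_{i'}p^{i'-l}V^{i'}$ with $i'\ge i$ together with those $m_{i'}p^{i-l}V^{i'}$ with $i'\le i$. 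Hence the relation $A\le m_ip^{i-l}V^i$ holds precisely for the elements just listed, and one checks this is exactly the relation generated by the horizontal edges (incrementing $i$ at fixed $l$, equivalently fixed ``level'' $i-l$ unchanged? — I would be careful here: the horizontal edge in Figure~\ref{graph} goes $m_ip^{i-l}V^i \leftrightarrow m_{i+1}p^{i-l}V^{i+1}$, fixed power $i-l$, and the vertical edge goes $m_ip^{i-l}V^i\leftrightarrow m_ip^{i-l+1}V^i$, which in the $i'\le i$ clause with $i'=i$ corresponds to lowering $l$ by one) and vertical edges of the Hasse diagram.

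With that dictionary in hand, the argument is a short two-way implication. If $W$ is $\cend{T}$-invariant, Theorem~\ref{scheme}\ref{one} gives $W=\bigoplus m_i p^{i-l}V^i$ over some index set $I\subseteq\PP$ (after grouping the $j$-summands, as the paragraph before the corollary explains); conditions~\ref{two} and~\ref{three} say precisely that whenever $A\in I$ and $B\le A$ in $\PP$, then $B\in I$ — because $B\le A$ means $B$ appears in $\cend{T}(A)$, and \ref{two}--\ref{three} are exactly the statement that all such $B$ lie in $W$. Thus $I$ is decreasing. Conversely, if $I\subseteq\PP$ is decreasing and $W=\bigoplus_{A\in I}A$, then for each $A\in I$ we have $\cend{T}(A)=\bigoplus_{B\le A}B\subseteq \bigoplus_{B\in I}B=W$ since $I$ is downward closed; summing over $A\in I$ gives $\cend{T}(W)\subseteq W$, and $W$ is a direct sum of flag members of the prescribed form, so Theorem~\ref{scheme} (or directly the definition of $\cend{T}$-invariance) shows $W$ is invariant.

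I expect the only real subtlety — not an obstacle so much as a bookkeeping point to get right — is verifying that the order relation ``$A\le B$ iff $\cend{T}(B)\supseteq A$'' on $\PP$ is genuinely the reachability order of the Hasse diagram in Figure~\ref{graph}, i.e.\ that conditions~\ref{two} and~\ref{three} of Theorem~\ref{scheme} are the covering relations (horizontal and vertical edges) and that their transitive closure recovers the full relation $\cend{T}(B)\supseteq A$. This amounts to checking that $i'\ge i$ at fixed $l$ is generated by unit steps $i\mapsto i+1$ and that $i'\le i$ at fixed power $i-l$ is generated by unit steps decreasing $l$, together with the observation that these two families of moves suffice to reach every $A$ with $A\le B$ from $B$. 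Once this identification is made explicit, Corollary~\ref{outdegree} is an immediate restatement of Theorem~\ref{scheme}, and I would present it as such rather than reproving anything. The mention of the Gelfand--Tsetlin poset is purely contextual and plays no role in the proof.
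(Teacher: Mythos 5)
Your argument is correct and is, in substance, exactly what the paper leaves implicit: Corollary~\ref{outdegree} follows from Theorem~\ref{scheme} once the order on $\PP$ is set to $A\le B\iff A\subseteq\cend{T}(B)$, and your two‑way implication carries out that translation. One small slip in the bookkeeping you flagged: the vertical edge of Figure~\ref{graph} joins $m_ip^{i-l}V^i$ to $m_{i+1}p^{i+1-l}V^{i+1}$ (condition~\ref{two}, a unit step in $i$ at fixed $l$), not $m_ip^{i-l}V^i$ to $m_ip^{i-l+1}V^i$; the latter is a diagonal, i.e.\ the composite of one horizontal and one vertical step. This is immaterial to your proof, which only uses the relation $\le$ and never the identification of the covering relations.
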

Of course, the linear operator $T$ has Jordan blocks of only a finite number of possible sizes.  Hence, all but
a finite number of the multiplicities $m_i$ are zero, and so the corresponding vertices in the poset
$\PP$ do not contribute to any $\cend{T}$--invariant subspaces of which they are summands.
As a result, the $\cend{T}$--invariant subspaces are in bijection with the decreasing subsets of a
\emph{subposet} of $\PP$, defined by the vertices with nonzero multiplicities $m_i$.  

Furthermore,
$\cend{T}$--invariant subspaces form a lattice, under the usual operations of sum and intersection
of subspaces.  This lattice structure clearly coincides with the usual lattice structure on
decreasing subsets of the poset $\PP$.  Summarizing, we obtain the following classification.
\begin{theorem}\label{classi}
Let $T\in\End(V)$ have a single eigenvalue and Jordan blocks whose sizes define a finite subset $B\subset
\NN$.   The lattice of $\cend{T}$--invariant subspaces of $V$ is isomorphic to the the lattice of decreasing subsets in
$\PP_B$,
the subposet of $\PP$ generated by the columns of length $i\in B$.
\end{theorem}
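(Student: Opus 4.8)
The plan is to deduce Theorem~\ref{classi} from Corollary~\ref{outdegree} by keeping careful track of the vertices of $\PP$ whose multiplicity $m_i$ vanishes. By Corollary~\ref{outdegree}, a subspace $W\subset V$ is $\cend{T}$-invariant if and only if $W = W(I) := \bigoplus_{v\in I}v$ for some decreasing subset $I\subseteq\PP$, where each vertex $v = m_ip^{i-l}V^i$ is regarded as a subspace of $V$. This is not yet a bijection onto decreasing subsets: a vertex $v$ in a column of length $i\notin B$ has $m_i = 0$ and so $v = \{0\}$, and adjoining or deleting such a ``dead'' vertex does not change $W(I)$. I would first record that $W(I) = W(I')$ holds exactly when $I$ and $I'$ have the same trace on the set of live vertices, i.e.\ on $\PP_B$.

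Next I would show that $W\mapsto I\cap\PP_B$ is a well-defined bijection from $\cend{T}$-invariant subspaces to decreasing subsets of the induced subposet $\PP_B$. If $I$ is decreasing in $\PP$ then $J := I\cap\PP_B$ is decreasing in $\PP_B$, since $y\in\PP_B$ and $y\leq x\in J$ force $y\in I$, hence $y\in J$. Conversely, starting from a decreasing subset $J$ of $\PP_B$, its downward closure $I$ in $\PP$ is decreasing in $\PP$ and satisfies $I\cap\PP_B = J$ (a live vertex below some $x\in J$ already lies in $J$, by decreasingness of $J$ in $\PP_B$); then $W(J) = W(I)$ is $\cend{T}$-invariant by Corollary~\ref{outdegree}. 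Injectivity follows because each column of $\PP$ is a chain, so $W$ records, column by column, precisely how far up that chain the subset reaches, hence determines $J$.

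Finally I would verify that this bijection respects the lattice structures. On the subspace side the meet and join are $\cap$ and $+$, both of which visibly preserve $\cend{T}$-invariance. Since distinct columns of $\PP$ sit in complementary Jordan summands $V^i$, while the vertices within one column are nested, for decreasing subsets $J_1, J_2$ of $\PP_B$ a direct computation gives $W(J_1)\cap W(J_2) = W(J_1\cap J_2)$ and $W(J_1) + W(J_2) = W(J_1\cup J_2)$; as $J_1\cap J_2$ and $J_1\cup J_2$ are again decreasing in $\PP_B$, the bijection carries $\cap$ and $+$ to the usual meet $\cap$ and join $\cup$ on decreasing subsets, which is the assertion.

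The step demanding the most care is the second one: one must distinguish ``a decreasing subset of $\PP$ that happens to be contained in $\PP_B$'' --- too restrictive, since its downward closure in $\PP$ may reach dead vertices --- from ``a decreasing subset of the \emph{induced} poset $\PP_B$'', which is the correct object. When $B$ has gaps, the Hasse diagram of $\PP_B$ can even acquire covering relations not visible in Figure~\ref{graph}; but since only the partial order, and not the diagram, governs decreasing subsets, this causes no trouble, and the remainder is routine linear algebra on a direct sum.
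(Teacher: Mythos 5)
Your proof is correct and follows the same route as the paper: both deduce the theorem from Corollary~\ref{outdegree} by discarding the zero‑multiplicity vertices, passing to the induced subposet $\PP_B$, and checking that $\cap$ and $+$ on subspaces correspond to $\cap$ and $\cup$ on decreasing subsets. The paper gives this only as a brief informal discussion preceding the theorem statement; your version supplies the missing bookkeeping, most usefully the distinction between decreasing subsets of $\PP$ that happen to lie in $\PP_B$ and decreasing subsets of the \emph{induced} poset $\PP_B$ (handled via downward closure), a genuine subtlety the paper glosses over.
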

\begin{example}
If $T$ is nilpotent, with any number of Jordan blocks, but of sizes 1, 3, and 5 only, then the $\cend{T}$--invariant
subspaces are in bijection with decreasing subsets of the following subposet of the Gelfand-Tsetlin
poset:
\begin{equation*}\small
\xymatrix@=.7em@ur{
\bullet \ar@{-}[rr]\ar@{-}[dd]& & \bullet \ar@{-}[rr]\ar@{-}[dl]\ar@{-}[dd] |!{[dl];[dr]}\hole & & \bullet \ar@{-}[dl]\\
& \bullet \ar@{-}[rr]\ar@{-}[dl]\ar@{-}[dd]|!{[dl];[dr]}\hole & & \bullet \ar@{-}[dl] &\\
 \bullet \ar@{-}[rr]\ar@{-}[dd] & & \bullet \ar@{-}[dl] &&\\
& \bullet \ar@{-}[dl] &&&\\
\bullet& &&&\\
}
\end{equation*}
\end{example}
\begin{remark}
It is
well-known~\cite{Stanley} that the decreasing subsets of a poset form a distributive
lattice, which is self-dual when the original poset is.  As a result, we may conclude that the
lattice of $\cend{T}$--invariant subspaces is a self-dual distributive lattice.
\end{remark}
\section{Orbit lattice}\label{scombinatorics}

Theorem~\ref{classi} characterizes the lattice of $\cend{T}$--invariant subspaces, and therefore the
lattice of centralizer orbits, as the lattice of decreasing subsets of a poset constructed entirely
from the knowledge of the sizes (not the multiplicities) of the Jordan blocks which occur in each
generalized eigenspace.  We now give a more explicit description of the orbit lattice, without
reference to the Gelfand-Tsetlin poset.

The orbit lattice is a Cartesian product of the orbit lattices in each generalized eigenspace
$V_\lambda$.  We first determine the lattice $\Gamma_\lambda$ corresponding to a single generalized
eigenspace, using the notation from Theorem~\ref{classi}.

Assume $T$ has a single eigenvalue and let $B\subset \NN$ be the set of sizes of Jordan blocks in
the Jordan decomposition of $T$.  For each block size $i\in B$, let $C_i$ be the corresponding
column of length $i$ in the subposet $\PP_{B}\subset \PP$.  The columns are totally ordered
$(C_{i_1}, C_{i_2}, \ldots)$ from smallest to largest, reading from left to right in the poset
$\PP_{B}$.

A decreasing subset $X\subset \PP_{B}$ is determined by the sequence
$(\#(X\cap C_{i_k}))_{k\in\NN}$, which counts the number of elements in each column.  Alternatively,
we may represent this information as a sequence $\delta^X = (\delta_1^X, \delta^X_2, \ldots)$ of
successive increments, in the following way.  Let
\begin{equation}\label{bijj}
    \delta^X_k = \begin{cases}
        \#(X\cap C_{i_1}) & k=1 \\
        \#(X\cap C_{i_k}) - \#(X\cap C_{i_{k-1}}) & k>1.
    \end{cases}
\end{equation}
The condition that $X$ be a decreasing subset is easier to state in terms of the sequence
$\delta^X$: for all $k$,
\begin{equation}\label{incbij}
    0\leq \delta^X_k \leq \Delta_k,
\end{equation}
where $\Delta_1 = i_1$ and $\Delta_k = i_k - i_{k-1}$ for $k>1$.  In other words, the intersection
of $X$ with each successive column $C_k$ must not decrease in length, and any increase is bounded by
the increment $\Delta_k$ in the total column length.
\begin{Definition}\label{sbi}
    Let $B\subset \NN$ be the set of sizes of Jordan blocks for $T$, for a fixed eigenvalue.  We
    define the sequence of block increments $\Delta = (\Delta_k)_{k\in\NN}$ as follows:
    \begin{align*}
    \Delta_1 &= i_1,\\
	\Delta_k &= i_k - i_{k-1},\ \ \text{ for } k>1,
\end{align*}
where $B=\{i_1,i_2,\ldots\}$, in increasing order so that $i_k<i_{k+1}$ for all $k$.
\end{Definition}
We may then rephrase the condition~\eqref{incbij} as follows.
\begin{proposition}
    Equation~\ref{bijj} establishes a bijection between decreasing subsets $X\subset \PP_B$ and
    elements in 
    \[
    [\Delta_1]\times [\Delta_2]\times\cdots\times[\Delta_{\#B}],
    \]
where $(\Delta_k)_{k\in\NN}$ is the sequence of block increments of $T$ and $[n]$ is the set
$\{0,1,\ldots, n\}$.
\end{proposition}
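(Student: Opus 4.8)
The plan is to factor the correspondence~\eqref{bijj} as a composition of two elementary bijections: first from decreasing subsets of $\PP_B$ to their vectors of column cardinalities, and then from those cardinality vectors to the product $[\Delta_1]\times\cdots\times[\Delta_{\#B}]$ via ``take successive differences''.

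For the first step, send a decreasing subset $X\subset\PP_B$ to the cardinality vector $c^X=(c^X_k)_k$ with $c^X_k=\#(X\cap C_{i_k})$. Each column $C_{i_k}$ is a chain of length $i_k$, and $X\cap C_{i_k}$ is a down-set of that chain, hence the unique initial segment of size $c^X_k$; since the columns partition $\PP_B$ we recover $X=\bigsqcup_k(X\cap C_{i_k})$, so $X\mapsto c^X$ is injective. Its image is exactly the set $\Cc$ of vectors $c$ with $c_0:=0$ and $0\le c_k-c_{k-1}\le\Delta_k$ for all $k$: necessity is the content of~\eqref{incbij}, and conversely one checks that the subset consisting of the bottom $c_k$ elements of each column $C_{i_k}$ is closed under the cover relations of $\PP_B$ displayed in Figure~\ref{graph} precisely when these inequalities hold.

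For the second step, the difference map $c\mapsto\delta$ of~\eqref{bijj} and the partial-sum map $\delta\mapsto\bigl(\sum_{j\le k}\delta_j\bigr)_k$ are mutually inverse, and they transport the condition ``$0\le c_k-c_{k-1}\le\Delta_k$ for all $k$'' to the condition ``$0\le\delta_k\le\Delta_k$ for all $k$'', which is precisely membership in $[\Delta_1]\times\cdots\times[\Delta_{\#B}]$. The remaining constraint $c_k\le i_k$ is then automatic, by induction: $c_1=\delta_1\le\Delta_1=i_1$ and $c_k=c_{k-1}+\delta_k\le i_{k-1}+\Delta_k=i_k$. Composing the two bijections yields the map $X\mapsto\delta^X$ of~\eqref{bijj} as the desired bijection onto $[\Delta_1]\times\cdots\times[\Delta_{\#B}]$.

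I expect the only genuinely non-formal point to be the realizability half of the first step --- that every $c\in\Cc$ arises from some decreasing subset --- since it is here, rather than in the purely arithmetic bookkeeping, that the structure of the poset $\PP_B$ is used; everything else reduces to the observation that passing between a sequence and its sequence of partial sums is a bijection carrying one list of inequalities to the other.
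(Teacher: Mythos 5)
Your proof is correct and carries out the argument the paper leaves implicit (the paper states~\eqref{bijj} and~\eqref{incbij} and gives the proposition as a ``rephrasing'' without a formal proof). Your two-step factorization --- decreasing subsets $\leftrightarrow$ column-cardinality vectors $\leftrightarrow$ increment vectors --- is exactly the right decomposition, and you are right to flag that the only substantive step is that every admissible cardinality vector is realized, and also right to observe that $c_k\leq i_k$ is automatic from the telescoping bound, so the partial-sum map does land inside the set of legal cardinality vectors.

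One small imprecision worth fixing: Figure~\ref{graph} depicts the full Gelfand--Tsetlin poset $\PP$, not the subposet $\PP_B$. When the block sizes in $B$ are not consecutive integers, $\PP_B$ has \emph{induced} cover relations between its adjacent columns that are not covers in $\PP$ (the 1,3,5 example in the paper shows these extra diagonal edges), so ``closed under the cover relations displayed in Figure~\ref{graph}'' is not literally the condition you need. The fix is cheap: what you want is that the candidate subset is a down-set for the \emph{order} on $\PP_B$, and since a down-set of $\PP_B$ intersected with each chain $C_{i_k}$ is an initial segment, the decreasing condition reduces to the pairwise-column conditions of Theorem~\ref{scheme}, namely $c_{k'}\leq c_k$ and $c_k - c_{k'}\leq i_k - i_{k'}$ for $k'<k$. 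These are equivalent, by telescoping, to the consecutive-column constraints $0\leq c_k-c_{k-1}\leq\Delta_k$, which is precisely the inequality you transport through the partial-sum bijection. With that one clause replaced, the proof is complete.
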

The partial order on decreasing subsets of $\PP_B$ may be described as follows: $X\leq X'$ when
$\#(X\cap C_k) \leq \#(X'\cap C_{k})$ for all $k$.  In terms of the corresponding sequences of
increments $\delta^X, \delta^{X'}$, this is simply the condition
\[
\delta^X_1 + \cdots + \delta^X_k \leq \delta^{X'}_1 + \cdots + \delta^{X'}_{k},\text{ for all } k.
\]
This partial order defines a natural poset structure on the product
$\prod_k[\Delta_k]$, for any sequence $(\Delta_k)_{k\in\NN}$ of natural numbers.
\begin{Definition}\label{lat}
    Given the sequence $\Delta=(\Delta_k)_{k\in\NN}$ of natural numbers, let $[\Delta_k] =
    \{0,\ldots, \Delta_k\}$ and  define a partial order on
    $\Gamma_\Delta = \prod_k[\Delta_k]$ as follows: for $r=(r_i)_{i\in\NN}$ and $s=(s_i)_{i\in\NN}$
    in $\Gamma_\Delta$, $r\leq s$ if and only if
\begin{equation}\label{order}
    \sum_{i\leq k} r_i\leq
    \sum_{i\leq k}s_i,\text{ for all } k\in\NN.
\end{equation}
\end{Definition}
We conclude with the explicit description of the full orbit lattice in terms of the posets defined above.
\begin{theorem}\label{latticethm}
    For each distinct eigenvalue $\lambda$ of $T\in\End(V)$, let $\Delta^\lambda$ be
    the associated sequence of block increments, as in Definition~\ref{sbi}.  Then the lattice of orbits of $C(T)$ is
    isomorphic to the Cartesian lattice product
    \[
    \prod_{\lambda\in\mathrm{Spec}(T)} \Gamma_{\Delta^\lambda},
    \]
    for $\Gamma_{\Delta^\lambda}$ as given in Definition~\ref{lat}.
\end{theorem}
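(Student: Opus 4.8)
The plan is to chain together the identifications already established in the paper and, at each link, check that it is an isomorphism of \emph{lattices}, not merely a bijection of sets. First, by the orbit-closure bijection of Section~2, the map $\OO\mapsto\BAR{\OO}$ identifies the set of $\caut{T}$-orbits with the set of $\cend{T}$-invariant subspaces of $V$; declaring $\OO\leq\OO'$ to mean $\BAR{\OO}\subseteq\BAR{\OO'}$, this is an isomorphism of posets, and since the $\cend{T}$-invariant subspaces are closed under sum and intersection and thereby form a lattice, the orbit lattice \emph{is} the lattice of $\cend{T}$-invariant subspaces. From here on I would work entirely on the subspace side.

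Second, I would reduce to a single eigenvalue. Each projection $\pi_\lambda:V\to V_\lambda$ onto a generalized eigenspace is a polynomial in $T$, hence lies in $\cend{T}$; so any $\cend{T}$-invariant subspace $W$ satisfies $W=\oplus_\lambda\pi_\lambda W$ with each summand $\cend{T_\lambda}$-invariant, and conversely any direct sum over $\lambda$ of $\cend{T_\lambda}$-invariant subspaces of $V_\lambda$ is $\cend{T}$-invariant because $\cend{T}=\oplus_\lambda\cend{T_\lambda}$ with no cross-components between distinct $V_\lambda$. Thus $W\mapsto(\pi_\lambda W)_\lambda$ is a bijection from the lattice of $\cend{T}$-invariant subspaces onto $\prod_\lambda\Gamma_\lambda$, where $\Gamma_\lambda$ is the lattice of $\cend{T_\lambda}$-invariant subspaces of $V_\lambda$; it preserves lattice operations because, the $V_\lambda$ being complementary $T$-invariant subspaces, both $W+W'$ and $W\cap W'$ are computed generalized-eigenspace by generalized-eigenspace. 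This leaves only the single-eigenvalue claim $\Gamma_\lambda\cong\Gamma_{\Delta^\lambda}$.

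Third, for a fixed eigenvalue $\lambda$ with block-size set $B=B^\lambda$, Theorem~\ref{classi} identifies $\Gamma_\lambda$ with the lattice of decreasing subsets of $\PP_B$, ordered by inclusion, with meet given by intersection and join by union (matching intersection and sum of subspaces). Since each column $C_{i_k}$ of $\PP_B$ is a chain, a decreasing subset meets it in an initial segment, so a decreasing subset $X$ is determined by the tuple of column cardinalities $\bigl(\#(X\cap C_{i_k})\bigr)_k$, and $X\subseteq X'$ precisely when $\#(X\cap C_{i_k})\leq\#(X'\cap C_{i_k})$ for all $k$. Now apply the increment map~\eqref{bijj}: by the proposition following Definition~\ref{sbi} it is a bijection from decreasing subsets of $\PP_B$ onto $\Gamma_{\Delta^\lambda}$, and because the partial sums recover the column cardinalities, $\sum_{i\leq k}\delta^X_i=\#(X\cap C_{i_k})$, the inclusion order is carried exactly to the order of Definition~\ref{lat}. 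An order isomorphism between two lattices is automatically a lattice isomorphism, so $\Gamma_\lambda\cong\Gamma_{\Delta^\lambda}$; composing the three identifications gives the orbit lattice $\cong\prod_\lambda\Gamma_{\Delta^\lambda}$.

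I expect no serious obstacle here: every single-eigenvalue ingredient is already in place, so the only genuinely new verification is the product-over-eigenvalues reduction of the second step, and that is routine given that the $V_\lambda$ are complementary $T$-invariant subspaces and $\cend{T}$ has no cross-components. The one point to keep an eye on is that at each link both the bijection and its inverse are order-preserving, so that joins and meets transfer; once partial sums of increments are matched with column cardinalities, the partial-sum order of Definition~\ref{lat} is visibly the transported inclusion order, and distributivity and self-duality of the resulting lattice then follow from the corresponding properties of lattices of decreasing subsets of finite posets.
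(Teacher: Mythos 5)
Your proposal is correct and follows essentially the same chain of identifications the paper uses (orbit closure $\to$ $\cend{T}$-invariant subspaces $\to$ eigenspace-by-eigenspace reduction $\to$ decreasing subsets of $\PP_B$ via Theorem~\ref{classi} $\to$ the increment tuple via~\eqref{bijj}); the paper presents this as the discussion preceding the theorem rather than a formal proof. The only difference is that you verify explicitly at each link that the bijection and its inverse are order-preserving, which the paper asserts without detail.
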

\begin{example}\label{bigex}
    Let $T\in\End(V)$ be nilpotent, with Jordan blocks of sizes 1, 3, and 5 only.  The sequence of
    block increments is then $\Delta = (1, 2, 2)$, and so the $C(T)$--orbit lattice is given by $[1]\times [2]\times[2]$,
with the ordering specified by~\eqref{order}.  The
    Hasse diagram of this lattice is given below.
\begin{equation*}
    \xymatrix@=4pt{
        && & & & &&122\ar@{-}[lld] & \\
        && & & &121\ar@{-}[lld]\ar@{-}[rd]& & & \\
        &&&112\ar@{-}[lld]\ar@{-}[rd]  & &&120\ar@{-}[lld] & & \\
        &022 \ar@{-}[rd]& & &111\ar@{-}[lld]\ar@{-}[rd]\ar@{-}[ddd] |!{[ddl];[dr]}\hole& & & & \\
        &&021\ar@{-}[rd]\ar@{-}[ddd] &&&102\ar@{-}[lld]\ar@{-}[ddd] &    & & \\
        && &012\ar@{-}[ddd] & & &    & & \\
        && & &110\ar@{-}[lld]|!{[ul];[ddl]}\hole\ar@{-}[rd] & &    & & \\
        &&020 \ar@{-}[rd] & & &101\ar@{-}[lld]\ar@{-}[rd] &    & & \\
        && &011 \ar@{-}[lld]\ar@{-}[rd]& & & 100\ar@{-}[lld]   & & \\
        &002 \ar@{-}[rd]  && & 010\ar@{-}[lld]& &    & & \\
        && 001\ar@{-}[lld]& & & &    & & \\
        000 && & & & &    & & 
    }
\end{equation*}
\end{example}
\section{Counting orbits}
By Theorem~\ref{latticethm}, centralizer orbits are in bijection with elements in the Cartesian
product 
\[
\prod_{\lambda\in\mathrm{Spec}(T)} \prod_{k\in\NN} [\Delta^\lambda_k],
\]
where the first product is over the distinct eigenvalues and the second is over the finite number of
nonzero block increments associated to a fixed eigenvalue.  The
cardinality of $[\Delta^\lambda_k]$ is $1 + \Delta^\lambda_k$, so we obtain a simple formula for the total
number of centralizer orbits in terms of the set of Jordan block sizes in each generalized
eigenspace.

In this section, we use the theory of generating functions~\cite{Stanley} (c.f. Prop 1.4.4) to refine this count,
giving the number of centralizer orbits of dimension $n$.  Unlike the total number of orbits, this
depends on the multiplicities $m_i$ of the vertices in the Gelfand-Tsetlin poset, and hence the
multiplicity of the Jordan blocks of a fixed size in each generalized eigenspace.

First consider the case that $T$ has a single eigenvalue, let $B = (i_1, i_2, \ldots)$ be the sizes of Jordan
blocks in increasing order as before, and for each $i_k\in B$, let $m_{i_k}$ be the multiplicity of
the Jordan block of size $i_k$. Let $(C_{i_1}, C_{i_2}, \ldots)$ be the columns of the subposet
$\PP_B$ as before.   If $X\subset\PP_B$ is a decreasing subset, then the centralizer
orbit it represents has dimension given by the sum of the  $\#(X\cap C_{i_k})$, where each term is
weighted by the multiplicity $m_{i_k}$.  

As a result, the sequence of increments $\delta^X = (\delta^X_1,\delta^X_2,\ldots)$ defined
by~\eqref{bijj} can be used to compute the dimension of the orbit $\OO_X$ by the following formula:
\[
\dim\OO_X = m_{i_1}\delta^X_1 + m_{i_2}(\delta^X_1+\delta^X_2) + \cdots +
m_{i_k}(\delta^X_1+\cdots+\delta^X_k) + \cdots.
\]
From this, we define the following generating function:  let $M_n = \sum_{k\geq n} m_{i_k}$ be the
tail sums of the sequence of multiplicities, and define 
\[
f(x) = \prod_{n\in\NN} \left( \sum_{i=0}^{\Delta_n} x^{iM_n}\right).
\]
Then the coefficient of $x^m$ in this polynomial is the number of distinct centralizer orbits of
dimension $m$.    We conclude with the generating function in the case of multiple eigenvalues.
\begin{theorem}
    For each eigenvalue $\lambda$ of $T\in\End(V)$, let $(\Delta^\lambda_k)_{k\in\NN}$ be the
    associated sequence of Jordan block increments, let $(m^\lambda_{i_k})_{k\in\NN}$ be the
    sequence of multiplicities of Jordan blocks of size $i_k$, as above, and let
    $M^\lambda_n = \sum_{k\geq n} m^\lambda_{i_k}$ be the tail sums of these multiplicities.  Define
    the polynomial
\[
f_\lambda(x) =\prod_{k\in\NN} \left( \sum_{i=0}^{\Delta^\lambda_k} x^{iM^\lambda_k}\right).
\]
Then the number of orbits of the centralizer of $T$ of dimension $n$ is given
by the coefficient of $x^n$ in the generating function
\[
\prod_{\lambda\in\mathrm{Spec}(T)} f_\lambda(x).
\]  
\end{theorem}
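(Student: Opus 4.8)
The plan is to reduce at once to the single‑eigenvalue case and then, after an Abel summation, to recognize the dimension of an orbit as an explicit linear functional of its increment sequence; this makes the orbit‑counting generating function factor into the stated product.

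By Theorem~\ref{latticethm}, together with the facts that $\cend{T}=\bigoplus_\lambda\cend{T_\lambda}$ and hence $\caut{T}=\prod_\lambda\caut{T_\lambda}$ acts componentwise on $V=\bigoplus_\lambda V_\lambda$, an orbit of $\caut{T}$ is the product of orbits $\OO_{X^\lambda}\subset V_\lambda$, one for each eigenvalue, and its dimension is $\sum_\lambda\dim\OO_{X^\lambda}$. Thus dimension is an additive statistic on the Cartesian product of orbit sets, so by the multiplicativity of generating functions over products (\cite{Stanley}, Prop.~1.4.4) the orbit‑counting generating function equals $\prod_\lambda\big(\sum_{X^\lambda}x^{\dim\OO_{X^\lambda}}\big)$. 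It therefore suffices to prove, for a single fixed eigenvalue, that $\sum_X x^{\dim\OO_X}=f_\lambda(x)$, where the sum runs over the decreasing subsets $X\subset\PP_B$.

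Fix one eigenvalue, with $p=(x-\lambda)$. The orbit $\OO_X$ is open and dense in the $\cend{T}$‑invariant subspace $W_X$ it generates, so $\dim\OO_X=\dim W_X$. By Corollary~\ref{outdegree}, $W_X=\bigoplus_k m_{i_k}\,p^{\,i_k-l_k}V^{i_k}$ with $l_k=\#(X\cap C_{i_k})$; since each summand $p^{\,i_k-l_k}V^{i_k}_j$ is the member $F_{l_k}$ of the flag of $\CC[x]/p^{i_k}$ and hence has dimension $l_k$, summing over the $m_{i_k}$ copies gives $\dim\OO_X=\sum_k m_{i_k}\,\#(X\cap C_{i_k})$. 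Writing $\#(X\cap C_{i_k})=\sum_{j\le k}\delta^X_j$ in terms of the increments of~\eqref{bijj} and exchanging the order of summation,
\[
\dim\OO_X=\sum_k m_{i_k}\sum_{j\le k}\delta^X_j=\sum_j\Big(\sum_{k\ge j}m_{i_k}\Big)\delta^X_j=\sum_k M_k\,\delta^X_k .
\]

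Finally, \eqref{bijj} is a bijection from decreasing subsets $X\subset\PP_B$ onto $[\Delta_1]\times\cdots\times[\Delta_{\#B}]$, so summing the last display over all such $X$ and factoring over the independent coordinates $\delta_k$,
\[
\sum_X x^{\dim\OO_X}=\sum_{\delta}x^{\sum_k M_k\delta_k}=\prod_k\Big(\sum_{i=0}^{\Delta_k}x^{iM_k}\Big)=f_\lambda(x),
\]
so the coefficient of $x^n$ here counts the orbits of dimension $n$ in $V_\lambda$. Combining with the first paragraph, the coefficient of $x^n$ in $\prod_\lambda f_\lambda(x)$ is the number of $\caut{T}$‑orbits of dimension $n$. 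The one step needing care is the interchange of summations that produces the tail‑sum multiplicities $M_k$, together with keeping the indexing of block sizes, multiplicities, and increments consistent; everything else is assembled directly from Theorem~\ref{latticethm}, Corollary~\ref{outdegree}, and the bijection~\eqref{bijj}.
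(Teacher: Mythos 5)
Your proof is correct and follows the same route the paper takes in the ``Counting orbits'' section: reduce to a single eigenvalue, compute $\dim\OO_X=\sum_k m_{i_k}\#(X\cap C_{i_k})$, rewrite via the increment bijection~\eqref{bijj} and an Abel summation to get $\sum_k M_k\delta^X_k$, and factor the generating function over the independent coordinates $\delta_k$. You have simply made explicit a few steps the paper leaves implicit, such as identifying $\dim\OO_X$ with the dimension of the invariant subspace it is dense in.
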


\begin{example}
Let $T$ be nilpotent, with Jordan blocks of sizes 1, 3, and 5 only, as in Example~\ref{bigex}, and assume the multiplicity of the Jordan blocks is 1, 1, and 1 respectively.  The block increment sequence is then $(1, 2, 2)$,  and
the multiplicity sequence is $(1,1,1)$, with tails $(3, 2, 1)$.  The generating function is then 
\begin{align*}
f(x) &= (1+x^3)(1 + x^2 + x^4)(1 + x + x^2)\\
&= 1 + x + 2x^2 + 2x^3 + 3x^4 + 3x^5 + 2x^6 + 2x^7 + x^8+x^9,
\end{align*}
yielding a total of $f(1) = 18$ orbits, occupying all dimensions from $0$ to $9$.
\end{example}

\section{Acknowledgments}

The idea to study centralizer orbits was given to us by Roger Howe during the PCMI
workshop on Lie Theory in the summer of 1998, during which much of this work was completed. We
apologize for the delay in publication.  We thank Robert Bryant, Chris Douglas, Mike
Hill, Marcus Hum, and especially John Labute for helpful conversations. We thank the referee for
several improvements to the paper.


\end{document}